\newtheorem{theorem}{Theorem}[subsection]
\newtheorem{proposition}[theorem]{Proposition}
\newtheorem{lemma}[theorem]{Lemma}
\newtheorem{corollary}[theorem]{Corollary}
\newtheorem{definition}[theorem]{Definition}
\newtheorem{example}[theorem]{Example}
\def \1{\mathbb {1}}
\def \SM{\mathbb {S}}
\def \RM{\mathbb {R}}
\def \NM{\mathbb{N}}
\def \CM{\mathbb{C}}
 \def \Hom {{\rm Hom}}
 \def \Hom {{\rm Hom}}
\def \p {{\rm exp\,}}
\def \Id {{\rm Id\,}}
\def \d{\partial}
\def\a{\alpha}
\def\b{\beta}
\def\l{\lambda}
\def\p{\varphi}
\def\G{\Gamma}   
\def\D{\Delta}
\def \s{\sigma}
\def \to{\longrightarrow} 
\def \alg{\mathfrak{g}}
\def \< {{\langle }}
\def \> {{\rangle }}
\def \( {\left( }
\def \) {\right) }
\newcommand{\Bt}{{\mathcal B}}
\newcommand{\Ct}{{\mathcal C}}
\newcommand{\Lt}{{\mathcal L}}
\newcommand{\Mt}{{\mathcal M}}
\newcommand{\Ot}{{\mathcal O}}
\newcommand{\Xt}{{\mathcal X}}
\newcommand{\lra}{\longrightarrow}
\newcommand{\Sets}{{\bf  Sets}}
\newcommand{\Vect}{{\bf  Vect}}
\newcommand{\Ban}{{\bf  Ban}}
\newcommand{\Kol}{{\bf  Kol}}
\title[Banach space functors]{A category  of \\ Banach space functors}
\author{Mauricio GARAY \MakeLowercase{ and} Duco van Straten.}
\begin{document}


\begin{abstract}
We introduce a sheaf theoretic viewpoint on functional analysis designed for infinite dimensional Lie group actions. We develop functional calculus for Banach valued functors  and, in particular, prove the existence of an exponential map for a certain class of operators that generalise first order partial differential operators. \end{abstract}
\maketitle
 
\section{\large \bf Introduction}
\subsection{Normal forms and versal deformations}
Lie group actions play an important role in many branches of mathematics: classification problems, representation theory, constructions of moduli spaces and stacks etc. At the frontier between mathematics and physics, one encounters loop groups, gauge groups, diffeomorphism groups which all turn out to be infinite dimensional. There is no general theory for the study of group actions in the infinite dimensional context and, when hard analysis turns out to be efficient --for instance in KAM theory-- the arguments are in general highly technical. Our aim is to provide a categorical framework from  which one can easily deduces general results on group actions, such as normal forms and versal deformations.

To explain these two notions --which are at the origin of our investigations-- consider the familiar finite dimensional situation of a Lie group $G$ acting on a vector space $E$. The {\em normal form problem} consists in finding a ``good representative'' in each orbit. The Jordan normal form for the adjoint action of $G=Gl(n,\CM)$ on $E=M(n,\RM)$ provides such an example, although the notion of ``goodness'' is subjective and related to the type of applications one has in mind.

\begin{figure}[htb!]
 \includegraphics[width=0.4\linewidth]{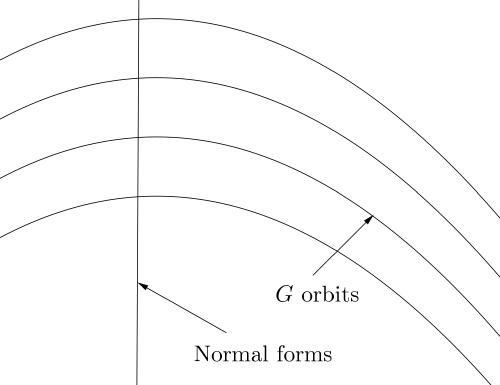}
\end{figure}
\ \\

Once a normal form is chosen, say $x$, the {\em versal deformation problem} around the point $x$ consists in finding an affine space $x+F \subset E$ which is transversal to the $G$-orbits. The family  $x+F$ is then called {\em a versal deformation of $x$}. Versal deformations might be thought as families of normal forms. Although for $G=GL(n,\RM)$ the subject is classical, versal deformations of minimal dimension for the adjoint action were given by Arnold in 1971 and extended by Brieskorn and Slodowy to simple Lie algebras~\cite{Arnold_matrices,Brieskorn_ICM,Slodowy}

\subsection{Infinitesimal action}
A basic technique for finding normal forms and versal deformations consists in looking at the infinitesimal action
$$\alg \times E \to E $$
of the Lie algebra $\alg$ on the tangent plane at the point $x$ which we identify with the vector space $E$. The question is then: does the surjectivity of the map
$$\rho:\alg \times F \to E,\ (v,t) \mapsto v(x)+t$$ implies that any element of a neighbourhood of $x$ can be taken back to an element of $F$?  

In finite dimensional spaces and even in Banach spaces, the answer is yes, as one can prove using the implicit function theorem. 
 
\subsection{The Lie iteration}
There is a more traditional way to prove that a subspace $F$ is transversal to the orbits than to use implicit function theory based on perturbation theory. We write an element in a neighbourhood of $x \in E$ as
$$x_0=x+r_0$$ 
Then we search $v_0 \in \alg $ and $t_0 \in F$ solving the {\em homological equation}:
$$v_0(x)=r_0+t_0 $$
and define a first element $g_0=e^{-v_0}$. We then have:
$$g_0(x+r_0)=x+t_0+r_1 .$$
We now repeat the procedure and try to find $v_1,t_1$ solving the equation:
$$v_1(x+t_0)=r_1+t_1 $$
and take  $g_1=e^{-v_1}$. We get now:
$$g_1g_0(x+t_0+t_1)=x+t_0+t_1+r_2 $$
and so on. This process --that we call the {\em Lie iteration}-- can be used in many places: differential equations, singularities of mappings, hypersurface singularities etc. The important point is that in many applications the homological equation can easily be solved along the transversal $x+F$.

\vskip0.3cm

\begin{figure}[htb!]
 \includegraphics[width=0.5\linewidth]{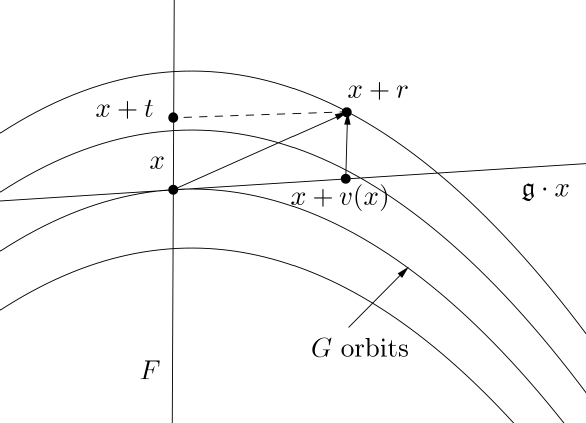}
\end{figure}
\ \\
\subsection{Convergence of the iteration}
The choice of a bounded operator 
$$u:E \to E$$ of a Banach algebra transforms the Banach space $E$ into a module over the ring of convergent power series having a convergence radius $R > \| u \|.$ This leads to the standard {\em functional calculus} which in particular establishes the existence of an exponential together with the estimate 
$$\| e^u \| \leq e^{\| u \|} .$$
If we now assume that the solutions $(v_n)$ to the homological equations and the elements of the transversal $(t_n)$ might be chosen so to define  summable norms sequences then the Lie iteration converges and
$$\|\prod_{i \geq 0} e^{v_i} \| \leq \prod_{i \geq 0} e^{\| v_i \|}$$
This is the only case where the convergence of  the Lie iteration is elementary. We refer to~\cite{KAM_theory_I} for details. In locally convex spaces, the situation is much more involved, as usually there is no functional calculus available.

\subsection{The exponential map}
In the sixties, infinite dimensional Lie groups appeared in gauge theories and in hydrodynamics. After Arnold proposed to include hydrodynamics as the geometry of the group of volume preserving diffeomorphisms, precise definitions were used to prove existence and unicity results~\cite{Arnold1966geometrie,Arnold_Khesin,Ebin_Marsden,leslie1967differential,Marsden1970hamiltonian,Omori1973groups}. Since then the study of infinite dimensional Lie group theory remained a subject of active research. We refer the interested reader to~\cite{Glokner2006,Khesin2009geometry,neeb2006towards,Omori2017infinite}.

Now, in the finite dimensional situation, a Lie algebra 
$\alg$ is related to its group $G$ via the exponential map
$$\alg \to G,\ v \mapsto e^v .$$
In the infinite dimensional situation, the situation is much more complicated. Even if we assume the exponential to be well-defined, it is in general not a local diffeomorphism. Milnor had already observed this fact for the group $Diff(\SM^1)$ ; his argument is quite simple: if a diffeomorphism turns out to be the exponential of a vector field, then linearising the vector field  would linearise the mapping as well, but any neighbourhood of the identity contains many non-linearisable diffeomorphisms~\cite[Warning 1.6]{Milnor1984remarks}. For locally convex spaces, no general theorem  stating the existence of such a mapping is known~(see \cite{neeb2006towards} for a survey).

In particular, the manifold structure of these infinite dimensional Lie groups is not given by the exponential but rather by special parametrisations. 
For instance, if $v$ is the germ of a vector field on $\RM^n$, then  
$\Id+v$ is the germ of a diffeomorphism and the map
$$v \mapsto \Id+v $$
can be used to  define an infinite dimensional Lie group structure for germs of diffeomorphism. 
 
 The problem is then that for these special parametrisations, simple operations rapidly become complicated~; for instance the inverse map of $\Id +v$ is given by Lagrange's inversion formula which is already non-trivial. This contrasts with the inverse of $e^v$ which is simply $e^{-v}$. Considerations of special  subgroups like symplectomorphisms or volume-preserving diffeomorphisms require even more complicated arguments.

\subsection{The Morse lemma}
What happens to the notion of normal forms and versal deformation in the infinite dimensional context? To get an idea of the complexity involved in the convergence of the Lie iteration, let us consider the action of germs of biholomorphisms on the algebra $\CM\{ x \}$ of convergent power series. To be even more precise, let us analyse the following elementary form of the Morse lemma:
\begin{lemma}
Assume $f=x^2+o(x^2) \in \CM\{ x \}$ then there exists a biholomorphic map-germ $x=\p(y)$ such that
$$f \circ \p(y)=y^2 $$
\end{lemma}
\begin{proof}
 The proof is straightforward. We write
 $$f(x)=x^2g(x),\ g(x)=1+o(1) $$
 and define $y=x\sqrt{g}$
\end{proof}
The locally convex Lie group of biholomorphic map germs
$$\p:(\CM,0) \to (\CM,0) $$
acts on the space $\CM\{ x \}$. The lemma says that the affine subspace $x^2+\Mt^3$ lies inside the orbit of $x^2$ where
$$\Mt=\{ f \in \CM\{ x \}:f(0)=0 \} $$
stands for the maximal ideal.

Let us now look at the solution given by the Lie iteration.
If we choose any vector field $v=a(x)\d_x$ where $a$ lies in $\Mt^2$ (the square of the maximal ideal of the local ring $\CM\{ x \}$), it is not difficult  to show the convergence of the formal power series
$$e^v=\sum_{n \geq 0} \frac{v^n}{n!} $$
So let us  write the Lie iteration starting from a particular case, say
$$f(x)=x^2+x^3 $$
Here $E=\Mt^2$ and $F=\{ 0 \}$ and the Lie algebra is given by germs of vector fields $a(x)\d_x,\ a \in \Mt^2$. We have 
$$v_0=\frac{x^2}{2}\d_x.$$ Then we need to compute
$$e^{-v_0}f=x^2+o(x^3) $$
and so on. Here the exponential might be interpreted either as the formal power series or as the time $1$ flow of the vector field but how to prove the convergence of the Lie iteration in this case? 

We would need estimates for the normalising fields $v_0,v_1,\dots$ and therefore introduce Banach norms on the locally convex space $\CM\{ x \}$ which induce operator norms on derivations. To construct these norms, one usually chooses open neighbourhoods in $\CM$ over which the functions are holomorphic and then try to control
the  effect of taking the exponential and how it transforms these domains.  Although this particular case of the Morse lemma is one of the simplest examples, the convergence of the iteration is already a non trivial exercise.

\subsection{Lie iteration vs implicit function theorems}
So in applications, the convergence of the Lie  iteration or similar methods is hard to establish and is usually done by a long series of estimates, which require great skill. One of the first examples that displayed such virtuosity was given by Arnold in its original's proof of the KAM theorem~\cite{Arnold_KAM}. 

 The work of Moser represents a radical change. Moser proposed to base infinite dimensional group action on implicit function theorems. This project has been achieved with some success: Sergeraert proved versal deformation of mappings, Moser did find a simple proof to Arnold's theorem for vector fields which is the toy model of KAM theory~\cite{Bost_KAM,Fejoz_KAM,Moser_Pisa_2,Sergeraert,Zehnder_implicit}. 

So the use of implicit function theorems was regarded as the functional analytic solution to the long and tedious computations, as Zehnder enthusiastically wrote in 1972: {\em For
instance, while all previous proofs of the theorems in question involve infinitely many coordinate change of variables and consequently complicated convergence arguments, we shall avoid this inconvenience and work in a function space over a fixed set of variables~\cite{Zehnder_implicit}}. 

It took nevertheless more than 10 years before a baby model of the KAM theorem (Kolmogorov invariant torus theorem) was proven and, as far as we are aware of, no proof of the full KAM theorem of 1963 has been
published along these lines~\cite{Bost_KAM} (see also~\cite{Fejoz_KAM}). 
So among the two roads --perturbation theory and Nash-Moser theorems-- none of them seem to give a way out : whichever we choose heavy computations and tricky arguments are waiting for us! 

In fact these two roads merge and Moser's program can be followed together with perturbation theory in the analytic setting. We will develop a framework which surprisingly produces a functional calculus, that we call {\em functorial calculus} which goes beyond the case of Banach algebras. Both an exponential mapping with explicit estimates and a simple convergence criterion emerges from its definition. In fact we shall prove that, like in the finite dimensional situation: {\em the product of exponentials converges if and only if the elements of the Lie algebra we exponentiate are summable for some appropriate norms.}  So the {\em complicated convergence arguments} of Zehnder will disappear.

\subsection{Why categories and not just locally convex spaces?}
The answer can already be found in the works of Siegel and Kolmogorov: one has to keep track of how the domains over which the maps are defined~\cite{Kolmogorov_KAM,Siegel_linearisation}.  

The classical way to keep track of these domains is to take disks indexed according to their radii and show that although the radii become smaller they converge to a positive limit. So one easily imagines that by considering more complicated cases, the complexity of computations grows rapidly and like in tensor calculus, we rapidly run into a debauch of indices and estimates for the sizes of the different sets and the norms of the operators which are involved.

The categorical way to handle this problem is to consider indices as if they were the base of a fibration, like coordinates in a topological space.  The fibre over a set may consist of holomorphic functions or vector fields defined over this set and  we think of it as a kind of bundle. Of course, in the background, the sets are still indexed but the categorical setting looks somehow ''coordinate free''. There are many places in mathematics where the operation of ''dropping indices'' involves a simplification of the  setup : Matrices/Linear algebra, Tensors/Differential geometry,  Equations/Schemes.
 
 \subsection{What is new? (or at least seems to be new)}
In the classical case, one considers a single Banach chain indexed by an interval or by integers, the scale is sometimes increasing and sometimes decreasing. This is not the case here: starting from an interval, we construct Banach spaces over different kind of spaces like open triangles, closed triangles and subdomains in $\RM^2$. These play the role of definition domains and the way they are mapped to each other is the key of understanding functorial calculus.

There is another point to be understood. While in the classical case, the norms are constructed a priori, in the categorical context, the norms emerge from general constructions. In particular, these are generally given by complicated formulas that would have been difficult to guess. Note also that unlike the classical approach, the formalism allows us to  work with several scales at the same time. To see how this works in practise, we refer to~\cite{Symplectic_torus,Versal_fields,Herman_conjecture}.

However, our framework is quite restrictive as we content ourselves to the case of linear mappings and therefore, this paper constitutes only the first steps towards a theory of Lie functors, which would go beyond the theory of locally convex Lie groups.

\section*{}
\section{{\large \bf The Functor Category}}
\section*{}

Category theory is a language for a large part of mathematics and we assume the reader has some familiarity with notions from basic category theory and the
most basic categories like {\bf Set}, {\bf Vect} or {\bf Ban} (with bounded linear maps as morphisms).
We refer to  the first chapters of \cite{Mac_Lane_categories} or
\cite{Kashiwara_Schapira_categories} for more details. For other categorical aspects of Banach spaces we refer to \cite{Castillo_Banach_categories}.

\subsection{Small categories}
Recall that a {\em small category} is a category whose objects and morphisms form a set. We
will denote such categories by ordinary capitals like $A, B, C$ and think
of them as sets enriched with an additional structure: for two objects 
$a, a' \in A$ there is specified a set of morphisms $Mor_A(a,a')$, which 
we may think of as arrows running from $a$ to $a'$.
The {\em opposite category} $A^{op}$ of $A$ is defined to have the same 
objects as $A$, but with all arrows reversed:
\[Mor_{A^{op}}(a,a')=Mor_A(a',a).\]

To an ordered set\footnote{We use the french convention. In most English texts, these objects are called {\em partially} ordered set, sometimes abbreviated to {\em poset}.} $(B,\ge)$ is associated a small category with $B$ as set of objects and the set of morphisms from $a$ to $b$ consisting of a single element if $a \ge b$, and empty otherwise. We will not make any distinction between an ordered set and the category it defines. The opposite category of $(B,\ge)$ is obtained by reversing the ordering, we will use the notation $(B,\ge)^{op} =(B,\le)$.  Note that a plain set $B$ can be seen as an example of an ordered set with trivial ordering (two elements can never be compared).\\

A (covariant) functor $\varphi: A \to B$ between two small categories is the 
natural generalisation of a map between sets. 
In case $A$ and $B$ are ordered sets, the functor property of $\varphi:(A,\geq) \to (B,\geq)$ translates into the statement that $\varphi$ preserves the ordering: if $a \ge a'$, then $\varphi(a) \ge \varphi(a')$.
The categorical approach contrasts with the naïve approach were compatibility relations need to be made explicit at every stage.

One can form the category $\mathcal{S}$ of small categories, whose
objects are small categories, and where $Mor_{\mathcal{S}}(A,B)$ consist of the
functors $\varphi:A \to B$.

\subsection{Category of $B$-objects}
We fix a small category $B$.
We denote by $\Sets$ the category of sets with arbitrary maps as
morphisms, $\Vect$ the category of vector spaces over a fixed field 
$K$ with $K$-linear maps as morphisms and $\Ban$ the category of
(real or complex) Banach spaces and continuous linear maps as morphisms. 

\begin{definition}\label{D::relative object}
Let $\mathcal{C}$ be a category and $B$ be a small category.
A $B$-object in $\mathcal{C}$ is a covariant functor
\[ F: B \to \mathcal{C} .\]
\end{definition}

So a $B$-object consist objects $F(b) \in \mathcal{C}$, $b \in B$  
and for each morphism
$\alpha \in Mor_B(b,b')$ a corresponding morphism $F(\alpha) \in Mor_{\Ct}(F(a),F(a'))$, 
such that $$F(\alpha\circ \beta)=F(\alpha) \circ F(\beta),$$ 
i.e. a composition of morphisms in $B$ is mapped by $F$ to a 
corresponding composition of morphisms in $\mathcal{C}$. We call 
$B$-objects in $\mathcal{C}$ also {\em objects over $B$}, 
as one may think of such a functor as a family of objects in $\mathcal{C}$ 
parametrised by $B$.\\ 

For instance a set over the ordered set $(\NM,\ge)$ can be identified with an inverse system of sets and maps
\[\ldots \to X_n \to X_{n-1} \to \ldots \to X_2 \to X_{1} \to  X_0\]
and an object over the opposite category $(\NM,\le)$ can be identified with a directed system of sets
\[X_0 \to X_1 \to X_2 \to \ldots \to X_n \to X_{n+1} \to \ldots\]

In general, if we consider a small category $B$ as a quiver, then a 
$B$-object in $\Vect$ is nothing but a {\em quiver representation}:
for each object of $B$ we are given a vector space, for each arrow 
a corresponding linear map between vector spaces.\\

The $B$-objects of $\mathcal{C}$ form the objects of a new category
$\mathcal{C}_B$; a morphism between functors 
$$F:B \to \Ct,\ G:B \to \Ct$$ is defined as a {\em natural
transformation} between the functors $F$ to $G$. These are defined by morphisms
$$u(a) \in Mor_{\mathcal{C}}(F(a),G(a))$$
for $a \in B$, such that for all $\alpha \in Mor_B(a,b)$ we have a
commutative diagram
$$\xymatrix{F(a) \ar[r]^{u(a)} \ar[d]_{F(\alpha)}& G(a) \ar[d]^{G(\alpha)} \\
 F(b) \ar[r]^{u(b)}& G(b)} 
$$

If $\varphi: A \to B$ is a morphism in $\mathcal{S}$, we can {\em pull-back}
a given $B$-object to an $A$-object $\varphi^*(F):=F \circ \varphi$ 
by composition. 
This natural operation defines a {\em pull-back functor}
\[\varphi^*:\mathcal{C}_B \to \mathcal{C}_A,\;\;F \mapsto F \circ \varphi .\]

\subsection{Geometric picture}
There is a simple way to associate  to a relative set defined by a 
functor $F:B \to \Sets$ an ordinary map of sets, that may provide some
additional geometrical intuition for the above discussion.
 
\begin{definition}\label{DD::total space} 
Given a $B$-set defined by a functor $F:B \to \Sets$,
we form the disjoint union
\[ X(F):=\bigsqcup_{b \in B} X_b,\;\; X_b:=F(b), \]
and call it the {\em total space} of the $B$-set. 
The map
$$p:X(F) \to B,\ X_b \ni x \mapsto b $$
is called the {\em associated bundle} and the set $X(F)_b=F(b)$ is called the {\em fibre} $p^{-1}(b)$ over $b \in B$.
\end{definition}

We will denote elements of $X(F)$ often by {\em pairs} $(b, x)$, where
$b \in B$ and $x \in X_b$. This suppresses the (crucial) 
information about the maps between the fibres. But these are often 
clear from the context, in which case we often say that $p$ or even $X$ is 
a $B$-set instead of specifying the functor $F$.\\

For a morphism $\p \in Mor_B(a,b)$  there is a corresponding 
map 
\[ \iota_{\p} : X_a \lra X_b\]
that we will call the {\em connecting morphism} of $\p$. If $B$ is an ordered set this means simply that
$a \geq b$ and we may write $\iota_\p=\iota_{ba} $. The functor property translates
into the statements that for all $a,b,c \in B$ with $a \ge b \ge c$.
\[  \iota_{cb}\iota_{ba} =\iota_{ca} .\]

\subsection{Relative vector spaces}
For a $B$-object in $\Vect$ or $\Ban$, we can form likewise its {total space}
\[ E(F):=\bigsqcup_{b \in B} E_b.\]
which defines a $B$-set $p: E(F) \lra B$. All fibres $E_b=p^{-1}(b)$ of this 
$B$-set now have a vector space or Banach space structure.
 
\begin{figure}[htb!]
\includegraphics[width=0.5\linewidth]{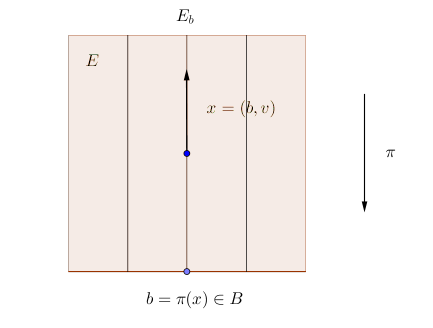}
\end{figure}

As such, $B$-vector spaces have some similarity with the idea
of a vector bundle or the \'etale space of a sheaf. Like for vector bundles, we sometimes identify the point $(b,x)$ of  the total space with the vector $x$ on the fibre.

The fibres over different points 
$b \in B$ can be very different, but the connecting morphism 
$F(\alpha):E_a \to E_b$ lying over a
morphism $\alpha \in Mor_B(a,b)$ gives a way to compare fibres over 
different points (``parallel transport''), and thus provides a 
categorical version of a connection.\ \\

\begin{figure}[htb!]
\includegraphics[width=0.5\linewidth]{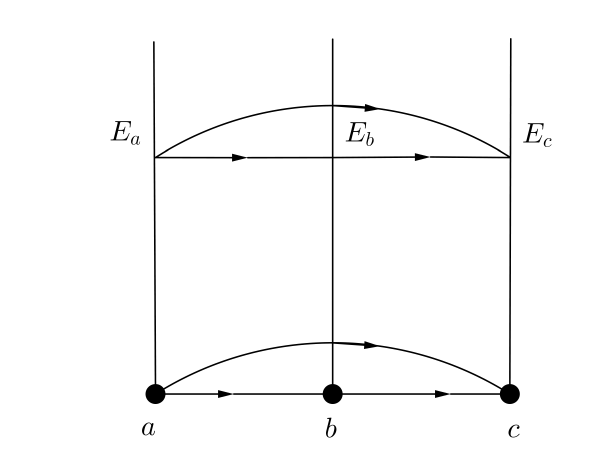}
\end{figure}

\subsection{The section functor}

\begin{definition}\label{D::section} Let  $p:X \lra B$ be a set over a small category
 $B$, and $A$ a subset of objects of $B$. Then
{\em a section over $A$} is a map 
$$s:A \lra X$$ such that 
\[p \circ s=Id_A .\]
We denote the set of all sections by $\Gamma(A,X)$.
\end{definition}

So a section just consists of the choice of an element $x_a \in X_a$ for each 
$a \in A$. In case of a relative vector space $p: E \lra B$, we may use
the vector space operations pointwise to define
the structure of a vector space on the set $\Gamma(A,E)$. Note that
\[\Gamma(A,E) = \prod_{a \in A} E_a\]
is just the {\em direct product} of all vector spaces $E_a, a \in A$.
If we define the {\em support} of section $s:A \lra E$ as the 
set of $a \in A$ for which $s(a) \neq 0$, then one can identify the
direct sum \[ \bigoplus_{a \in A} E_a \subset \prod_{a \in A} E_a\]
as the subspace of sections with {\em finite support}.

Note that in the definition of $\Gamma(A,X)$ we ignored the arrows in $B$.
Of course these are usually important.

\begin{definition}\label{D::horizontality}
Let  $p:X \lra B$ be a set over a small category $B$, defined by a
functor $F:  B \to \Sets$ and $A$ a sub-category of $B$. 
A section $s \in \Gamma(A,X)$ is called {\em horizontal} if for all 
$a,b \in A$ and $\alpha \in Mor_A(a,b)$ one has
\[ F(\alpha)(s(a))=s(b) .\]
We denote the subset of horizontal section by
\[ \Gamma^h(A,X) \subset \Gamma(A,X).\]
\end{definition}

\subsection{The set of morphisms between $B$-objects are not $B$-objects}
The contravariant nature of the Hom-functor in the first argument implies
that the morphisms of a category of $B$-objects are, in general, not $B$-objects. Let us clarify  this point by a simple example.

Let $B$ be the category $(\{ 0, 1\},\leq )$. It has two objects $\{ 0, 1\}$ and one non trivial morphism in $Mor(0,1)$.
Let $\Ct=\Sets$ be the category of sets. A functor
$$F:B \to \Ct $$
is nothing else than a map
$$f:X_0 \to X_1. $$
Consider two functors $F,G$ associated to two mappings
$$f:X_0 \to X_1,\ g:Y_0 \to Y_1 $$
We have two associated bundles 
$$X \to B,\ Y \to B $$
each one with two fibres and one connecting morphisms. If $Mor(X,Y)=Mor(F,G)$ were a $B$-object, then it would have also a  connecting morphism:
$$Mor(X_0,Y_0) \to Mor(X_1,Y_1) $$
This means that to any morphism $\p_0 \in Mor(X_0,Y_0)$ can be transported to a morphism $\p_1 \in Mor(X_1,Y_1)$ in such a way that the following diagram commutes:
$$\xymatrix{ X_0 \ar[r]^-f \ar[d]_-{\p_0} & X_1 \ar[d]^-{\p_1}\\
Y_0 \ar[r]^-g & Y_1}
$$
But this will not be possible in general. Take for instance 
$$X_0=X_1=Y_0=Y_1=\{-1,1\}.$$ If $f$ is a constant map and $g=\p_0=id$ then $g \circ \p_0=\text{id},\ \p_1 \circ f=\text{ constant}$.
Hence we find a contradiction.

\begin{figure}[htb!]
\includegraphics[width=0.7\linewidth]{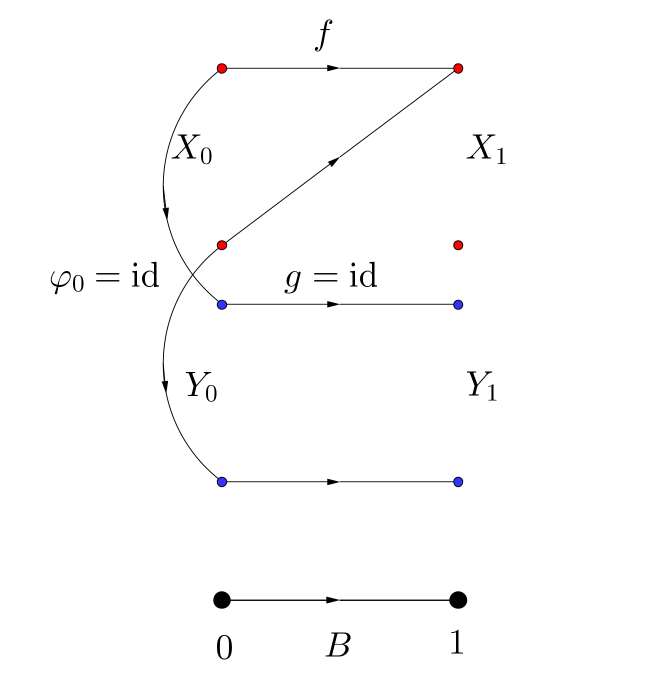}
\end{figure}

In the theory of normal forms, we often deal with morphisms of a category and sometimes with morphisms of morphisms (say if we map a vector field to another), therefore the category of $B$-objects cannot be taken as our fundamental construction.
\subsection{Construction of the functor category}
We fix a category $\mathcal{C}$ and consider $X \to B$ and $Y \to B$ two $\Ct$-objects over $B$, defined by functors
$$F,G:B \to \Ct.$$ 
We  observed that the set of morphisms between them does not form a $B$-set
in a natural way. There is no obvious way to associate to $\alpha \in Mor_B(a,b)$ a morphism 
$$u_\a:X_a=F(a) \to Y_b=G(b).$$
To adress this problem, it turns out that instead of working over
a fixed category $B$, it is useful to consider consider the category of {\em all the relative objects of $\mathcal{C}$ over all
possible small categories $B$}. This defines a big category $\mathcal{C}_*$ that we call the {\em functor category} of $ \Ct$ which we will now spell out.

Given two functors
$$F:B \to \mathcal{C},\ G:C \to \mathcal{C},$$  
we define a set of morphisms which is
a $B^{op} \times C$-object in $\mathcal{S}$. 
Here $B^{op}$ stands for the opposite category obtained by reversing
the arrows. 
 
Take two pairs $(a,b)$ and $(c,d)$ inside $B^{op} \times C$.
A morphism $(\alpha,\beta) \in Mor_{B^{op} \times C}((a,b),(c,d))$ 
is defined as a pair of morphisms
\[\alpha \in Mor_{B^{op}}(a,c)=Mor_B(c,a), \;\;\beta \in Mor_C(b,d).\]
If $f \in Mor_\Ct(F(a), G(b))$ then we obtain by composition 
\[ G(\beta) \circ f\circ F(\alpha) \in Mor_{\Ct}(F(c),G(d)) .\]

In terms of the associated bundles, we start with two bundles 
$$X \to B,\ Y \to C $$
and construct the  bundle
$$Mor(X,Y) \to B^{op} \times C  $$
with fibre $Mor(X_a,Y_b)$. If we now consider $(\alpha,\beta) \in Mor_{B^{op} \times C}((a,b),(c,d))$
then it maps the morphism $f=u(a,b) \in Mor(X_a,Y_b)$ to $g=u(c,d) \in Mor(X_c,Y_d)$ by the commutative diagram
$$\xymatrix{X_a \ar[r]^-{f} & Y_b \ar[d]^{\b} \\
X_c \ar[u]-^{\a} \ar[r]^-{g} & Y_d}$$

\begin{definition}
 The functor category of  a category $\Ct$ denoted $\Ct_*$ has objects functors from a small category to $\Ct$. A morphism between functors 
$$F:B \to \Ct,\ G:C \to \Ct$$ is defined by a collection of morphisms
$$u(a,b) \in Mor_{\mathcal{C}}(F(a),G(b))$$
for $a \in B^{op},\ b \in C$, such that for all $(\alpha,\beta) \in Mor_{B^{op} \times C}((a,b),(c,d))$ 
we have a commutative diagram
$$\xymatrix{F(a) \ar[r]^{u(a,b)} & G(b) \ar[d]^{G(\beta)} \\
 F(c) \ar[u]^{F(\alpha)}  \ar[r]^{u(c,d)}& G(d)} 
$$
\end{definition}
We denote this set of morphisms by
$$Mor_{\Ct}(F,G).$$
Note that this is naturally a set over $B^{op} \times C$.

The category $\mathcal{C}_*$ comes 
with a functor 
\[ \mathcal{C}_* \to \mathcal{S},\]
which maps each relative object to its base $B \in \mathcal{S}$. The
'fibre' over $B$ is the category $\mathcal{C}_B$ of $B$-objects, exhibiting
$\mathcal{C}_*$ as a fibred category over $\mathcal{S}$.  

\begin{example} 
Consider again the category $B=(\{ 0, 1\},\leq )$ with two objects $\{ 0, 1\}$ and one non trivial morphism in $Mor(0,1)$.
Take two functors
$$F,G:B \to \Ct $$
associated to two maps
$$f:X_0 \to X_1,\ g:Y_0 \to Y_1. $$
The morphisms form a $B^{op} \times B$ object:
$$Mor(X,Y) \to B^{op} \times B $$
The category $B^{op} \times B$ has four elements and the arrows encode how morphisms can be composed:\\
\vskip0.2cm

\begin{figure}[htb!]
\includegraphics[width=0.5\linewidth]{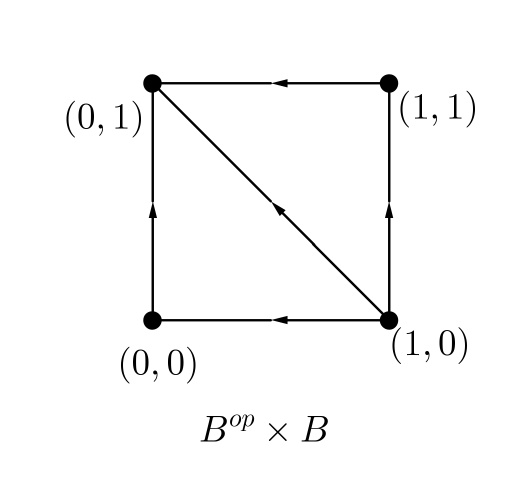}
\end{figure}
So starting with a quiver of two points connected by an arrow, we end up with a much more elaborated quiver.
\end{example}

\begin{example}
  Let $$D_t:=\{ z \in \CM\;|\;|z| <t\}$$
  denote the open disc of radius $t$ in the
complex plane. There are obvious inclusion maps $$D_s \hookrightarrow D_t$$
for $s \le t$, so we can see $D_t$ as fibre of a relative set $D$ over
$(\RM_{>0},\le)$.

Let $\Ot(D)$ be the vector space over $(\RM_{>0},\ge)$ with
fibre $$\Ot(D)_t:=\Ot(D_t):=\{f:D_t \to \CM\;|\;f\;\;\textup{holomorphic}\}$$
the space of holomorphic functions on $D_t$. The connection maps for $t \ge s$
\[ \iota_{st}:\Ot(D)_t \lra \Ot(D)_s\]
are the restriction maps. Note that these connection maps represent a {\em
  horizontal section $\iota$} of the relative vector space
  $$\Hom(\Ot(D),\Ot(D)) \to \RM_{>0}^{op} \times \RM_{>0} $$
  over the closed simplex
  \[\overline{\Delta}:=\{(s,t) \in \RM_{>}\times \RM_{>} \;|\;s \le t\}\]
  
\begin{figure}[htb!]
 \includegraphics[width=0.3\linewidth]{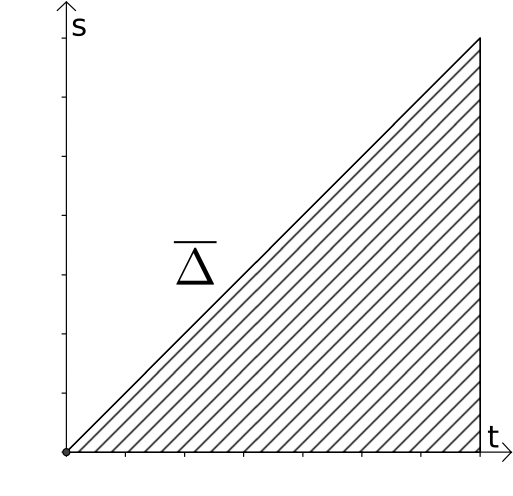}
\end{figure}

Now consider the scaling map 
$$m:\CM \to \CM,\ z \mapsto 2z$$ It induces
a well-defined map 
$$\CM\{ z \} \to \CM\{ z \},\ f(z) \mapsto f(2z)$$ on {\em germs} of holomorphic functions
at $0$, but it does not act on the fibres of $\Ot(D)$. Rather, it
produces a {horizontal section} $\s_m$ of $\Hom(\Ot(D),\Ot(D))$
over the set $$A:=\{ (t,s)\;|\; 2s \leq t \} .$$
Indeed, for  $(t,s) \in A$ the maps
\[ \Ot(D_t) \to \Ot(D_s);\;f(z) \mapsto f(2z) \]
is well defined and are compatible with restrictions.

\begin{figure}[htb!]
 \includegraphics[width=0.3\linewidth]{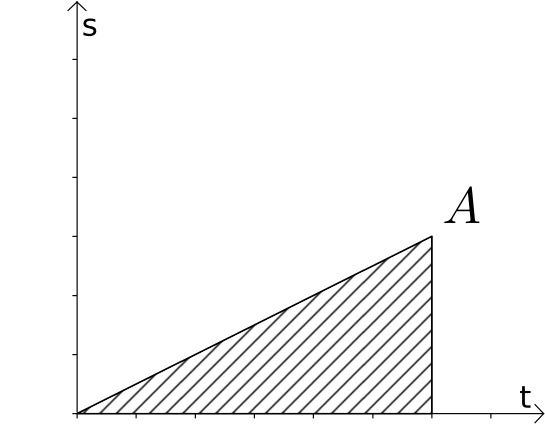}
\end{figure}

The set $A$ could be called the definition domain of our map $m$. We see that even for such
a basic example, starting with $\RM_{>0}$ as parameter space, we immediately
end up with  objects defined over a triangular region of $\RM_{>0}^2$. 
\end{example}
  
The construction of functor categories leads to the definition of Hom-spaces in a  natural way. Many considerations have a natural geometrical intepretation in the parameter space, as we will see in this paper. So if we use as heuristic rule that all constructions should be functorial, then this limits considerably the number of possibilities to define basic concepts; in turn these concepts organise the computations and we may use them as Ariadne's thread to get out of the labyrinth of indices.


\section{{\large \bf Relative Banach spaces}}
\subsection{Definition}
Recall that we denote by $\Ban$ the category with objects Banach spaces 
and morphisms continuous (=bounded) linear mappings and that we can form 
its functor category $\Ban_*$. An element of this category is nothing but a functor 
$$F:B \to \Ban $$ to 
the category of Banach spaces where $B$ is a small category. We formed the associated 
total space 
\[ E=\bigsqcup_{b \in B} E_b,\;\;\; E_b=F(b), \]
with its associated bundle
\[ p: E \to B.\]
We will now discuss some particular structures associated to this
situation.

For relative Banach spaces $E$ over $B$ and $F$ over $C$, 
use the the notation 
$$Hom(E,F) \to B^{op} \times C$$ 
for the relative Banach space whose fibre over $(a,b)$ is the Banach space 
$\Hom(E_a,F_b)=Mor_{\bf Ban}(E_a,F_b)$ of (continuous) linear maps $E_a \to F_b$ 
with the operator norm.

\subsection{Norms matter}
By definition, each Banach space $E_b$ is equipped with a specific 
norm $|-|_b$. For each vector  $x=(b,v) \in E$ we define
\[ |x|=|v|_b \in \RM_{\ge 0}, \]
and so we obtain a map
\[ |-|: E \to \RM_{\ge 0},\;\;x \mapsto |x|.\]

Together with the projection $p: E \to B$ to the base we obtain the 
{\em norm map}\index{norm map}
$$\nu:E \to B \times \RM_{\ge 0},\ x=(b,v) \mapsto (b,|x|)=(b, |v|_b) . $$

If $s \in \Gamma(A,E)$ is a section, then for each $a \in A$ we can
consider the norm $|s(a)|$, hence we obtain a {\em norm function}\index{norm function}
\[ |s|:=|-|\circ s: A \lra \RM_{\ge 0}, \;\;a \mapsto |s(a)|\]
of the section.

\subsection{Banach spaces of sections } 
\label{SS::bounded}
In perfect analogy with vector bundles equipped with a Riemannian metric, we consider the spaces of bounded sections and horizontal bounded
sections.
\begin{definition}\label{D::bounded section} Let $E$ be a Banach space over $B$ and $A$ subset of 
objects of $B$. A section $s \in \Gamma(A,E)$ is called {\em bounded},
if the norm-function 
\[ a \mapsto |s(a)|\]
is bounded on $A$. We use the notation
\[ \Gamma^{b}(A,E):=\{ s \in \Gamma(A,E)\;\;|\;\;\sup_{a \in A} |s(a)| < \infty\} \]
for the set of all bounded sections. For $s \in \Gamma^{b}(A,E)$ we put
\[ |s|_A:=\sup_{a \in A} |s(a)| .\]
\end{definition}
Although elementary the following proposition is  fundamental:
\begin{proposition}
If $E$ is a Banach space over $B$ and $A$ a subset of objects then $\Gamma^b(A,E)$ with $|-|_A$ is a 
Banach space.
\end{proposition}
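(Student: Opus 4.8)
The plan is to verify directly that $\Gamma^b(A,E)$ with $|-|_A$ satisfies the two requirements of a Banach space: it is a normed vector space, and it is complete. The vector space structure comes for free from the fact that $\Gamma(A,E)=\prod_{a\in A}E_a$ is a vector space under pointwise operations (as noted in the text); I would first check that $\Gamma^b(A,E)$ is a linear subspace, using the triangle inequality fibrewise, $|s(a)+t(a)|_a\le|s(a)|_a+|t(a)|_a$, to see that a sum of bounded sections is bounded with $|s+t|_A\le|s|_A+|t|_A$, and similarly $|\lambda s|_A=|\lambda|\,|s|_A$. That $|-|_A$ is a genuine norm (not merely a seminorm) follows because $|s|_A=0$ forces $|s(a)|_a=0$ for every $a$, hence $s(a)=0$ in each Banach space $E_a$, hence $s=0$ as a section.

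The substantive point is completeness, and here the argument is the standard one for sup-normed spaces of bounded functions, adapted to the relative setting. Given a Cauchy sequence $(s_n)$ in $\Gamma^b(A,E)$, I would first observe that for each fixed $a\in A$ the estimate $|s_n(a)-s_m(a)|_a\le|s_n-s_m|_A$ shows $(s_n(a))_n$ is Cauchy in the Banach space $E_a$; by completeness of $E_a$ it converges to some element which I call $s(a)\in E_a$. This defines a section $s\in\Gamma(A,E)$ (the projection condition $p\circ s=\mathrm{Id}_A$ holds trivially since $s(a)\in E_a$). Next I would show $s$ is bounded and that $s_n\to s$ in $|-|_A$: fix $\varepsilon>0$, choose $N$ with $|s_n-s_m|_A\le\varepsilon$ for $n,m\ge N$, then for each $a$ let $m\to\infty$ inside $|s_n(a)-s_m(a)|_a\le\varepsilon$ to get $|s_n(a)-s(a)|_a\le\varepsilon$ for all $a$ and all $n\ge N$; taking the supremum over $a$ gives $|s_n-s|_A\le\varepsilon$. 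In particular $s=s_N-(s_N-s)$ with $|s-s_N|_A\le\varepsilon<\infty$ and $|s_N|_A<\infty$, so $s\in\Gamma^b(A,E)$, and the same estimate shows $s_n\to s$.

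I do not expect any real obstacle here; the only mild subtlety worth flagging is that the limit section is built pointwise (fibrewise) and one must check it actually lands in $\Gamma^b(A,E)$ rather than merely in $\Gamma(A,E)$ — but this is settled by the uniform estimate $|s_n(a)-s(a)|_a\le\varepsilon$ obtained above, which is uniform in $a$ precisely because we passed the Cauchy bound to the limit before taking the supremum. Note that no arrows of $B$ are used anywhere, consistent with the fact that $\Gamma^b(A,E)$ depends only on the underlying $B$-set and the fibrewise norms; horizontality plays no role in this statement.
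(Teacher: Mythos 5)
Your proof is correct and follows essentially the same standard completeness argument as the paper: construct the candidate limit $s$ fibrewise from the Cauchy condition using completeness of each $E_a$, then check it lands in $\Gamma^b(A,E)$. You are in fact slightly more thorough than the paper's proof, which establishes boundedness of $s$ by observing that the real sequence $(|s_n|_A)$ is Cauchy (hence bounded) but does not explicitly verify that $s_n\to s$ in the $|-|_A$-norm; you supply that step via the uniform estimate $|s_n(a)-s(a)|_a\le\varepsilon$ obtained by letting $m\to\infty$ in the Cauchy bound before taking the supremum over $a$.
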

\begin{proof} We write  temporarily $|-|$ for $|-|_A$. Clearly, one has:
\begin{align*}
|s_1+s_2| &\le |s_1|+|s_2|,\\
|\lambda\cdot s|&=|\lambda| \cdot |s| \end{align*}
and so $\Gamma^b(A,E)$ is a normed vector space.
Now consider a Cauchy sequence of sections $s_n$ in $\Gamma^b(A,E)$, so for any $\epsilon >0$
we can find $N \in \NM$ such that
\[ |s_n-s_m| \le \epsilon \;\;\;\textup{for}\;\;n,m \ge N .\]
As for all $a \in A$ one has
\[ |s_n(a)-s_m(a)| \le |s_n-s_m|,\]
it follows immediately that for any $a \in A$ the sequences $s_n(a)$ are 
Cauchy sequences in $E_a$ and thus converge in $E_a$ to an element $s(a)$ 
which defines a section
$$s \in \G(A,E) .$$ 
As 
\[||s_n|-|s_m|| \le |s_n-s_m|, \]
the sequence of norms $(| s_n |)$ is a Cauchy sequence in $\RM$ and is 
therefore bounded. As a consequence, the norm function $|s|$ is also 
bounded and thus the limit $s$ is again an element in $\Gamma^b(A,E)$.
\end{proof}

\begin{proposition}
\label{P::horizontal}
If $E$ is a Banach space over $B$ and $A \subset B$ a subcategory then the space 
\[ \G^h(A,E) \cap \G^{b}(A,E)\]
of horizontal and bounded sections is a Banach space.
\end{proposition}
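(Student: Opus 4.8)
The plan is to exhibit $\G^h(A,E) \cap \G^b(A,E)$ as a closed subspace of the Banach space $\G^b(A,E)$ constructed in the previous proposition; since a closed subspace of a Banach space is itself Banach, this suffices. So the only thing to prove is that $\G^h(A,E)$ is closed in $\G^b(A,E)$ for the norm $|-|_A$.

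First I would fix a morphism $\alpha \in Mor_A(a,b)$ and consider the ``defect'' functional $s \mapsto F(\alpha)(s(a)) - s(b)$, which is a continuous linear map from $\G^b(A,E)$ to $E_b$: continuity follows because evaluation at a point $c \in A$ is continuous (indeed $|s(c)| \le |s|_A$) and $F(\alpha)$ is a bounded operator $E_a \to E_b$, so the composite $s \mapsto F(\alpha)(s(a))$ is bounded, and $s \mapsto s(b)$ is bounded. Hence the kernel $\{s : F(\alpha)(s(a)) = s(b)\}$ is closed. Then I would observe that
\[
\G^h(A,E) \cap \G^b(A,E) = \bigcap_{a,b \in A}\ \bigcap_{\alpha \in Mor_A(a,b)} \{ s \in \G^b(A,E) : F(\alpha)(s(a)) = s(b) \},
\]
an intersection of closed subspaces, hence closed. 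This is really a soft argument: horizontality is a family of continuous linear constraints, so it cuts out a closed linear subspace.

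To finish, I would invoke the standard fact that a closed linear subspace of a Banach space, equipped with the restricted norm, is complete, hence a Banach space. Concretely: if $(s_n)$ is a Cauchy sequence in $\G^h(A,E) \cap \G^b(A,E)$, it converges in $\G^b(A,E)$ to some $s$ by the previous proposition, and since the limit lies in each of the closed constraint subspaces above it is again horizontal; thus $\G^h(A,E) \cap \G^b(A,E)$ is complete.

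There is no real obstacle here; the only point requiring a moment's care is the continuity of the evaluation-type maps used to see that the horizontality constraints are closed, but this is immediate from the definition $|s|_A = \sup_{a \in A}|s(a)|$ together with boundedness of each $F(\alpha)$. Alternatively, and even more directly, one can avoid naming the constraint subspaces and argue pointwise: given a Cauchy sequence $(s_n)$ in the intersection, it converges to a bounded section $s$ by the preceding proposition, and for each $\alpha \in Mor_A(a,b)$ one passes to the limit in the identity $F(\alpha)(s_n(a)) = s_n(b)$, using that $s_n(a) \to s(a)$ in $E_a$, that $F(\alpha)$ is continuous, and that $s_n(b) \to s(b)$ in $E_b$, to conclude $F(\alpha)(s(a)) = s(b)$, so $s$ is horizontal.
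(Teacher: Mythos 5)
Your proof is correct and is essentially the same as the paper's: the paper simply observes in one line that the horizontality equations $E(\alpha)(s(a))=s(b)$ cut out a closed linear subspace of $\G^b(A,E)$, and you have supplied the short verification (continuity of evaluation and of $F(\alpha)$, closedness of an intersection of kernels) that the paper leaves implicit.
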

\begin{proof}
The relative Banach space is defined by a functor
$$F:B \to \Ban $$
Consider a morphism  $\alpha \in Mor_A(a,b)$, the equations
\[F(\alpha) ( s(a) ) = s(b),\;\;\;\forall a, b \in A,\]
defining horizontality, determine a closed linear subspace of $\Gamma^{b}(A,E)$.
\end{proof}

\begin{definition}\label{D::horizontal bounded section} 
  Let $E$ be a Banach space over $B$. The Banach space of horizontal bounded
  sections over $A \subset B$ is denoted by 
\[ \G^{\infty}(A,E):=\G^h(A,E) \cap \G^{b}(A,E) .\]
\end{definition}

\subsection{Rescaling}

Rescaling  the norm in a single Banach space is usually of no 
great importance. In the relative situation, the rescaling operation becomes very non-trivial,
as the rescaling may depend on the point $b \in B$ in question. For instance if we the derivative
for bounded holomorphic functions on a disk, then by rescaling  the norm can be made bounded.

\begin{definition}\label{D::rescaling} 
A function $\lambda: B \to \RM_{>0}$ is called a {\em weight function}.
If $E \to B$ is a Banach space over $B$ and $\lambda$ a weight function, 
we define the {\em rescaled Banach space}\index{rescaled Banach space} 
over $B$ to be
\[ \widetilde{E} \to B \]
with 
\[ \widetilde{E}_b = E_b, \textup{and norm}\;\;\; |-|^{(\lambda)}:=\lambda(b) \cdot |-|_b .\]
\end{definition}

Note that the operation of rescaling has  some formal resemblance  
with the operation of tensoring with a line bundle in algebraic geometry.

\subsection{The derivative (part I)}
From now on, we will use systematically the derivative to explain the various constructions leading to the notion of local operators.
We consider again the relative disc 
$$D \to (\RM_{>0},\le)$$ 
with fibre $D_t=\{ z \in \CM:|z|<t \}.$ Given an open set $U \subset \CM$, let us denote by $\Ot^b(U)$ the Banach space of holomorphic bounded functions on $U$ with supremum norm
$$|f|=\sup_{z \in U} |f(z)|. $$
We have a relative Banach space
$$E:=\Ot^b(D) \to \RM_{>0} $$
with fibre $\Ot^b(D_t)$. For $t > s$, there are maps
\[ u_{st}: \Ot^b(D_t) \to \Ot^b(D_s),\;\;f \mapsto \frac{df}{dz}\]
that are compatible under restrictions. This means that
the derivative is a global section
\[ \frac{d}{dz} \in \G^{h}(\Delta,Hom(E,E)) \]
over the simplex
$$\D=\{ (t,s) \in \RM^2_{>0}: s<t \} .$$
However this section is not bounded, as for instance the function
$$f(z)=(1-z)\log(1-z) \in \Ot^b(D_t),\ t=1 $$
defines a unique horizontal section that we denote by:
$$\s=(1-z)\log(1-z) \in \G^h(]0,1],\Ot^b(D)) $$ 
Its derivative defines a section only over $]0,1[$:
$$\s'=-1-\log(1-z) \in \G(]0,1[,\Ot^b(D))$$
and for $s$ close enough to $1$ we have
$$\s'(s)=-1-\log(1-s) \xrightarrow[s \to 0]{} +\infty $$
However one has the {\em Cauchy inequality}
\[ |\frac{d}{dz} f|_s \le \frac{1}{t-s}|f|_t \]
as a direct consequence of Cauchy's integral formula.
Therefore the derivative is an unbounded section over the simplex $\D$
with a pole along the diagonal. This is the typical behaviour of the
norm for a differential operator of order $k$: the norm has a pole of order $k$
along the diagonal. Now rescaling the space of operators by the function $\l(t,s)=t-s$
transforms the unbounded section $d/dz$ into a bounded section in the rescaled Banach
space. 
\section{Kolmogorov spaces}
\subsection{Kolmogorov spaces}
We will now look at the most important case of Banach spaces $E$ over a basis $B$.
\begin{definition}\label{D::Kolmogorov space}  A Kolmogorov space\footnote{In set-theoretical topology the term Kolmogorov space is used for topological
spaces for which the $T_0$-separation axiom holds. We will never use the notion in this sense, although the topology defined by downsets in an ordered set
provide natural examples such $T_0$ spaces.}\index{Kolmogorov space} 
is a Banach space $E$ over a small category $B$, such that for all $\p \in Mor_B(a,b)$
{\em connecting morphisms}
$$\iota_{\p} : E_a \to E_b, $$
have norm at most $1$, so that
\[ |\iota_{\p}(x)| \le |x| . \]
We denote by $\Kol_*$ the full subcategory of $\Ban_*$ consisting of
Kolmogorov spaces.
\end{definition}
The relative Banach space $\Ot^b(D) \to \RM_{>0}$ and $\Hom(\Ot^b(D),\Ot^b(D))$ are obvious examples of Kolmogorov spaces over $\RM_{>0}$ and $\RM_{>0}^2$ respectively.

\subsection{Rescaled Kolmogorov spaces}
 By rescaling a Kolmogorov space $E \to B$ by a weight function $\l$ that
is increasing, we get a new Kolmogorov space 
$$E[\l] \to B $$
with fibres $(E_b,\l(b)|_b\cdot |)$. 
However in many applications we encounter situations where $\l$ is 
not increasing  and so we just get an ordinary relative Banach space:
$$ \widetilde{E} \to B. $$
 
 This is already the case for the derivative where we rescaled the relative space
$$\Hom(\Ot^b(D),\Ot^b(D)) \to \D^{op} \times \D $$
by the function $\l(t,s)=t-s$. This rescaled space is NOT a Kolmogorov space. Indeed, the connecting morphisms (= the restriction mappings)
$$\p_{ts}:\Ot(D)_t \to \Ot(D)_s,\ f \mapsto f $$
have now norm $t-s$. So the connecting morphisms
$$\Hom(\Ot^b(D),\Ot^b(D))_{t,s} \to \Hom(\Ot^b(D),\Ot^b(D))_{t',s'} $$
are no longer of norm $ \leq 1$.

In practise, this means that we lose completely our control on estimates. So we must find a way to construct a Kolmogorov space out of a relative Banach space and this is the Kolmogorification procedure. From a theoretical point of view, the construction might seem tautological but when we unwind the definition in concrete examples, we often end-up with complicated formulas for the norms which are constructed.

\subsection{Kolmogorification}

For an ordered set $(B,\ge)$, the down-set of $b$ consist of all $b' \in B$ for which
$b \ge b'$. Similarly we define in general

\begin{definition}\label{D::up-set}\label{D::down-set} 
Let $B$ be a small category and $b \in B$.
We define the {\em down-set} of $b$ as
\[ ]-\infty,b] :=\{ b' \in B\;|\; \exists \p \in  Mor_B(b,b')\},\]
and similarly the {\em up-set}
 \[ [b,+\infty[:=\{ b' \in B\;|\; \exists \p \in  Mor_B(b',b)\}.\]
 
\end{definition}
 
Kolmogorov spaces have the following characteristic property:

\begin{proposition} \label{P::tauto} Let $E$ be a Kolmogorov space
over $B$ and $b \in B$. Then there is a natural isometry of Banach spaces
\[ E_{b} = \Gamma^{\infty}(]-\infty,b],E) .\]
\end{proposition}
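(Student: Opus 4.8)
The plan is to exhibit an explicit pair of mutually inverse maps between $E_b$ and $\Gamma^{\infty}(]-\infty,b],E)$ and check that both are norm-preserving. First I would define a map
\[ \rho: E_b \lra \Gamma(]-\infty,b],E),\;\;\; x \mapsto s_x,\quad \textup{where}\;\; s_x(b'):=\iota_{b'b}(x)\;\;\textup{for}\;\; b' \le b. \]
I claim $s_x$ is a horizontal section: the identity $p\circ s_x = Id$ is clear, and for $\alpha \in Mor(b',b'')$ with $b' \le b'' \le b$ one has $E(\alpha)(s_x(b''))=\iota_{b'b''}\iota_{b''b}(x)=\iota_{b'b}(x)=s_x(b')$ by the functor property from Definition~\ref{D::Kolmogorov space}. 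Moreover $s_x$ is bounded, since $|s_x(b')|_{b'}=|\iota_{b'b}(x)|_{b'} \le |x|_b$ because restriction morphisms have norm at most $1$; hence $s_x \in \G^{\infty}(]-\infty,b],E)$ and $|s_x|_{]-\infty,b]} \le |x|_b$. Conversely I would define the \emph{evaluation at $b$} map
\[ \ev_b: \Gamma^{\infty}(]-\infty,b],E) \lra E_b,\;\;\; s \mapsto s(b), \]
which makes sense because $b$ itself lies in the down-set $]-\infty,b]$.

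Next I would check these are mutually inverse. For $x \in E_b$, $\ev_b(\rho(x)) = s_x(b) = \iota_{bb}(x) = x$ since $\iota_{bb}=Id$. For the other composition, let $s \in \G^{\infty}(]-\infty,b],E)$ and set $x:=s(b)$; then for any $b' \le b$, horizontality applied to the morphism $\iota_{b'b} = E(\alpha)$ gives $s_x(b') = \iota_{b'b}(s(b)) = s(b')$, so $\rho(\ev_b(s)) = s$. Thus $\rho$ and $\ev_b$ are inverse bijections, and both are clearly linear.

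Finally I would verify the isometry. I already have $|s_x|_{]-\infty,b]} \le |x|_b$. For the reverse inequality, note that $b \in ]-\infty,b]$, so $|s_x|_{]-\infty,b]} = \sup_{b' \le b}|s_x(b')| \ge |s_x(b)| = |x|_b$. Hence $|s_x|_{]-\infty,b]} = |x|_b$, i.e. $\rho$ is an isometry, and since it is a linear bijection onto $\G^{\infty}(]-\infty,b],E)$ (which is a Banach space by Definition~\ref{D::horizontal bounded section}), this establishes the claimed natural isometry $E_b = \Gamma^{\infty}(]-\infty,b],E)$.

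There is no real obstacle here: the statement is essentially a Yoneda-type tautology, and the only thing to be careful about is that the down-set $]-\infty,b]$ \emph{contains its top element} $b$, which is exactly what forces the supremum norm of the section to be attained at $b$ and hence equal $|x|_b$ rather than merely dominating or being dominated by it. Naturality in $b$ (compatibility with the restriction morphisms $\iota_{st}$) is immediate from the construction, since restricting a section $s_x$ over $]-\infty,t]$ to $]-\infty,s]$ corresponds under $\rho$ to applying $\iota_{st}$ to $x$.
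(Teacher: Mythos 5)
Your proof is correct and takes essentially the same approach as the paper, namely the map $x \mapsto \bigl(b' \mapsto \iota_{b'b}(x)\bigr)$ together with the observation that the norm function of this horizontal section attains its maximum at $b$. You simply spell out the inverse (evaluation at $b$) and the isometry verification, which the paper leaves implicit.
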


\begin{proof}
A vector $v \in E_{b}$ determines a unique horizontal section 
$$s:]-\infty,b] \to E,\ \p \mapsto \iota_{\p}(v) \in E_{\p(b)}$$
The Kolmogorov space property tells us that
$$|\iota_{\p}(v)| \le |v|.$$
Therefore the norm function $|s(\p)|$ attains its maximum at $\p=id$.  
\end{proof}

To any Banach space $E$ over $B$ we can associate in a natural
way a Kolmogorov space $K[E]$ over $B$, by setting
\[ K[E]_b :=\Gamma^{\infty}(]-\infty,b],E), \]
which is the space of {\em bounded horizontal sections} of $E$ over the
{\em down-set} of $b$.

If $s$ is such a section, we assign to it the norm
\[ |s|:=\sup_{\p \in ]-\infty,b] } |s(\p)| .\]  
Clearly, if $\p in Mor(b,b')$ then by composition of morphisms with $\p$,
we have $]-\infty,b'] \subset ]-\infty,b]$, so that
the connecting morphisms
$$K[E]_b:=\Gamma^{\infty}(]-\infty,b],E) \to K[E]_{b'}:=\Gamma^{\infty}(]-\infty,b'],E) $$
have indeed norm at most one, as we are taking the supremum over a smaller set.
As the space of horizontal bounded sections is a Banach space (Proposition \ref{P::horizontal}),
we get that:
$$K[E] \to B $$
is a Kolmogorov space.

\begin{definition}
 The space $K[E] \to B$ is called the {\em Kolmogorification} of the relative Banach space
$E \to B$.
\end{definition}
  If the vector space $E$ is already Kolmogorov, then
$K[E] = E$, and we get back the original space. Note that the
construction is functorial, so we obtain a Kolmogorification functor
\[\Ban_* \to \Kol_*,\;\;\; E \to K[E]\]
The process of Kolmogorification has a similarity to the 
sheafification of a presheaf.
\subsection{Rescaled Kolmogorov spaces}
As we observed if we rescale a Kolmogorov space $E \to B$ by a weight function $\l$ then, as a general rule, the rescaled space 
$$ \widetilde{E} \to B $$
with fibres $(E_b,\l(b)|_b\cdot |)$ is not a Kolmogorov space but we still can form the Kolmogorification for which we will use the notation:
\[ E[\l]:=K[\widetilde{E}].\]
\begin{example}
  Consider again the Kolmogorov space $\Ot^b(D)$ over $(\RM_{>0},\ge)$
  of bounded holomorphic functions on the relative disc $D$.
  One can form the Kolmogorov space, rescaled by $\l=t-s$:
 $$\Hom(\Ot^b(D),\Ot^b(D))[\l] \to \D^{op} \times \D .$$
 Then the derivative $\frac{d}{dt}$ can be considered as bounded
 global section of this space.
\end{example}
\begin{example}
 Let 
 $$E \to \RM_{>0}$$
 be a Kolmogorov space and rescale it by the function $\l=t^k$, we get a new Kolmogorov space $\Mt^k(E)$ which generalises the $k$-power of the maximal ideal of a local ring. Indeed if we consider the case $\Ot^b(D)$ then we precisely get the Kolmogorov spaces
 whose fibres consists of holomorphic with first $(k-1)$ vanishing derivatives at the origin.
\end{example}

\subsection{Opposite Kolmogorov space}\index{opposite Kolmogorov space}
Still we have not finished with the derivative. If $E,F$ are relative Banach spaces over $B$ then we can form
the relative Banach space
$$\Hom(E,F) \to  B^{op} \times B $$
and if $(\p,a,b)$ is an element of $\Hom(E,F)$ we may associate a horizontal section to it on the down set of $(a,b)$. Differential operators have one peculiarity: they define sections also on upsets. 

Say if we consider again the example of $E=\Ot^b(D)$ and if $\p:=\a(z)\d_z$ and $\a \in E_t$, that is $\a$ is bounded and holomorphic
inside $D_t$. Then for any $s \leq s' \leq t' \leq t$, the map
$$E_{t'} \to E_{s'},\ f \mapsto a(z)f'(z) $$
is defined. While ordinary morphisms are defined over $B^{op} \times B$, differential operators have the peculiarity to  define sections of a relative Banach space with the opposite base $  B \times B^{op} $. 

Obviously, by this operation downsets and upsets
get interchanged. Given a Banach space 
$$E  \to B$$ there is the opposite Kolmogorification procedure. For $b \in B$ we can define a Banach space by taking 
bounded horizontal sections over the {\em upset}:
$$ E^{op}_b:=\G^\infty([b,+\infty[,E),$$ 
For  $b' \ge b$ there is a natural connecting morphism
\[ E_b^{op} \to E_{b'}^{op}\]
with norm $\le 1$. So the $E^{op}_b$ form in a natural way
a Kolmogorov space
$$E^{op} \to B^{op}$$
called {\em opposite Kolmogorov space}.

\section*{}
\section{{\large \bf  Functorial calculus}}
\section*{}
 
Like the well-known functional calculus in Banach spaces, we would like to
develop something similar in the context of Kolmogorov spaces, which we will
call {\em functorial calculus}. In contradistinction to functional calculus,
it turns out that for the moment a satisfactory functorial calculus can only
defined for a special class of operators, that we call {\em first order local operators}.
We will first define this class of operators and, to do this, we start
with two simple fundamental examples. The class we consider is much more general than what is usually understood by local operators (operators involving a finite number of derivatives). It includes for instance changes of scales, Hadamard products and division by holomorphic functions. 
\subsection{The derivative(part II)}
Let us consider again the Kolmogorov space $E:=\Ot^b(D)$ over $\RM_{>0}$ of
bounded holomorphic functions on the relative disk. We have seen that the derivative $\frac{d}{dz}$ is not bounded as section of $\Hom(E,E)$, but will
be so as a section of
$$\Hom(E,E)^{op}[\l] \to \D \times \D^{op}$$
where we rescale the norm 
on the fibre $\Hom(E_t,E_s)$ by a factor $\l(t,s)=t-s$ and the Cauchy inequality implies that $|\frac{d}{dz}|=1$.

\subsection{Division of holomorphic functions}
For the second example, we consider the same Kolmogorov space 
$$E:=\Ot^b(D) \to \RM_{>0}$$ 
and let $M \subset E$ be the subspace of functions $f$ with $f(0)=0$. 

We can divide a function $f \in M_t$ by $z$ and form the holomorphic 
function $g(z):=f(z)/z \in E_t$ and we have an estimate
\[ |g| \le \frac{1}{t}|f| .\]
As suggested by the above example of the derivative, it is natural to
expect that the categorical approach tells us to consider division also
as a two parameter family of operators $u_{st} \in \Hom(M_t,E_s)$:
\[ u_{st}: M_t \to E_s, \ f \mapsto \frac{1}{z}f\]
parametrised by a closed simplex
\[  \overline{\Delta}:=\{ (t,s) \;\;| t  \geq s \}.\]
Is the section bounded? Of course not. If we consider the horizontal section 
$$\s=z \in \G(]0,t],M)$$
then
$$|\s(t)|=t,\ |\frac{1}{z}\s(t)|=1  $$
so the operator norm of the map
$$u_{st}:M_t \to \Ot^b(D)_s, f \mapsto \frac{1}{z} $$
equals $1/t$. This means that the horizontal section 
$$u \in \G^h(\overline{\Delta},\Hom(M,E))$$
has a simple pole along the coordinate axis $t=0$. We may however rescale the norm by a factor $\l(t,s)=t$ and then division operator becomes an element of the Banach space:
\[ \G^{\infty}(\overline{\Delta},\mathcal{H}om(E,E)[\l]). \] 

Back in 1944, H. Cartan showed that a similar phenomenon appears for division of systems of
holomorphic functions where one gets the same type of estimate~\cite{Cartan_ideaux}. So when performing divisions or when solving equations, we will, as a general rule, rescale by functions vanishing on the boundary of a simplex.

Sometimes instead of dividing by a function we multiply by some function and therefore we rescale by $s^k$
instead of dividing by $t^k$. If we combine these three types of operations (multiplication, division and differentiation), we encounter operators $f \mapsto g$
with estimates of the form 
\[ |g|_s \le \frac{s^l}{t^k(t-s)^m} |f|_t .\]
So we will rescale the norms by factor of the form $s^{-l}t^k(t-s)^m$.
The theory of {\em local operators} is an abstraction and 
generalisation of these examples. 

\subsection{Local operators}
We now consider Kolmogorov spaces $E$ and $F$ over a base $B$. As before, we write
$$\D:=\{(t,s) \in B^{op} \times B:t>s \} .$$

From a categorical point of view the set $\D$ is to be considered as lying in $B^{op} \times B$ and therefore the condition
$$(t',s') \leq (t,s) $$
corresponds to the inequalities $t' \leq t$ and $s' \geq s$.

The opposite Kolmogorov space $\Hom(E,F)^{op}$ restricted to $\D$ can be
rescaled by a weight function: 
$$\l:B \times B \to \RM_{>0}$$
and we get a resulting Kolmogorov space
over $\D^{op}$:
$$\Hom(E,F)^{op}[\l] \to \D^{op},$$
whose fibre above $(t,s)$ consists of bounded horizontal section 
$u \in \G^\infty(\D_{s,t},\Hom(E,F))$ equipped with the supremum norm. 

\begin{definition}
The Kolmogorov space 
$$\Lt^{\l}(E,F):=\Hom(E,F)^{op}[\l] \to \D^{op} ,\;\; \D^{op} \subset B \times B^{op}$$
is called {\em the Kolmogorov space of $\l$-local operators}.
\end{definition}
When we unwind the definition, we have as fibre
\[\Lt^{\l}(E,F)_{t,s}=\G^{\infty}(\D(t,s),Hom(E,F)[\l]),\]
the Banach space of bounded horizontal sections over the triangle
\[ \D(t,s)=\{(t',s') \in \D\;|\;t' \le t, s' \ge s \}.\]
\begin{center}
\begin{tikzpicture}[scale=2.5]
	\draw[->] (0,0) -- (0,1) node[left] {$s$};
	\draw[->] (0,0) -- (1,0) node[below] {$t$};
	\draw[thin, domain= 0.0:1.0] plot (\x, {\x)});
	\draw[thick, pattern= north east lines] (0.25,0.25)-- (0.75,0.25) -- (0.75,0.75) -- cycle;
\end{tikzpicture}
\end{center}
Partial differential operators are local but there are also many other operators which are local and are not of this sort. Consider for instance the Kolmogorov spaces 
 $$\Ot^b(D) \to \RM_{>0},\ \Ot^h(D) \to \RM_{>0}  $$
 of bounded holomorphic functions
or $L^2$ holomorphic  functions respectively. The injection 
$$\Ot^h(D) \to \Ot^b(D)$$ is not a usual Banach space mapping between the fibres (an $L^2$ function is not necessarily bounded) but it is a local operator~(we refer to \cite{Symplectic_torus} for more details).

Our formalism provides an automatic method of handling compatible 
families of operators, and packing them into Banach spaces with appropriate 
norms in a systematic way.

\subsection{Composition of local operators}
If $E,F,G$ are Kolmogorov spaces over $B$, we have a well 
defined bilinear composition maps
\begin{align*}
\circ: \Hom_{\D(t,s)} (E,F) \times \Hom_{\D(t,s)} (F,G) &\to \Hom_{\D(t,s)}(E,G)\\
 (u,v)& \mapsto v \circ u 
\end{align*}
where we used the notation
$$\Hom_{A} (-,-) := \G(A,\Hom(-,-))$$
This composition is defined as follows: given $u \in \Hom_{\D(t,s)} (E,F)$ and $v \in \Hom_{\D(t,s)} (F,G)$
and $(p,q) \in \D(t,s)$, we set
\[ (v \circ u)_{(p,q)}:=v_{p,r}\circ u_{r,q},\]
where $ p < r < q$. It is easily checked, that by the horizontality of $u$ and $v$ 
{\em the result does not depend on the choice of $r$}.
Obviously, if $u \in \Lt^{\l}(E,F) $ and $v \in \Lt^{\mu}(F,G)$ then
\[ v \circ u \in \Lt^{\rho}(E,G)\] 
if one has
\[ \rho(t,s) \le \sup_{t \ge m \ge s}(\l(t,m)  \mu(m, s) )\]

\begin{definition}
A sequence of weight functions $(\l_1,\l_2,\ldots,\l_k,\ldots)$ is called
{\em submultiplicative} if  for all $p,q$ there exist an $ m \in [s,t] \subset B$ with
\[ \l_{p+q}(t,s) \le \l_p(t,m)\l_q(m,s).\]
\end{definition}

Let us consider the special case $B=\RM_{>0}$.
One verifies that 
\[ \l_k(t,s) :=(t-s)^k/k^k,\ s,t \in \RM_{>0}\]
is a submultiplicative weight sequence: for given $s < t$ the maximum
of the function 
$m \mapsto (t-m)^p (m-s)^q$ on the interval $[s,t]$ is attained at the point
\[  m=\frac{p}{p+q}s+\frac{q}{p+q}t,\]


Local operators behave well with respect to composition:
 
\begin{proposition}
\label{P::algebra}
If the sequence $(\l_k)$ is submultiplicative then for $u \in \Lt^{\l_k}(E,F)$ and $v \in \Lt^{\l_l}(F,G)$ then 
we have $$v\circ u \in \Lt^{\l_{k+l}}(E,G)$$ 
and
\[ |v \circ u| \le |v||u|.\]
\end{proposition}
  
In particular for 
$$u_1,u_2,\ldots,u_n \in \Lt^{\l_1}(E,E)$$ then
\[ u_1 \circ u_2\circ \ldots \circ u_n \in \Lt^{\l_n}(E,E)\] 
and we have the estimate:
\[ |u_1 \circ u_2\circ \ldots \circ u_n| \le |u_1| |u_2|\ldots |u_n|, \]
Especially for $u_1=u_2=\ldots=u_n=u$:
\[ |u^n| \le |u|^n .\]
So the graded algebra
\[ \Lt^{\l}(E,F):=\oplus_{k=0}^{\infty}\Lt^{\l_k}(E,F)\]
behaves very much like a Banach algebra and this is the reason that
one is able to develop a functional calculus in this setting.

In reading these formulas one should be aware that for reasons of simplicity
we always write $|-|$ for the norms, but which norm this in fact is, 
depends on the element to which we apply it. For example, 
if $u$ is $\l_1$-local, the  norm in the expression $|u|^n$ refers to 
the $n$-th power of the norm on $\Lt^{\l_1}(E,E)$, whereas in $|u^n|$ we are using the norm on  $\Lt^{\l_n}(E,E)$~! 

\subsection{Submultiplicative functions}

\begin{definition}
A weight function $\l(t,s)$ is {\em submultiplicative} if the sequence
\[ (\l_1,\l_2,\l_3,\ldots) \]
is a submultiplicative sequence, where
\[ \l_n(t,s) := \frac{e^ n \l^n(t,s)}{ n^n}.\]
\end{definition}
The factor $n^n$ appears naturally when we compose local operators while
the factor $e=2.718281828\ldots$ is arbitrary, but leads to a simplification of 
certain statements in functorial calculus due to the fact that~(see for instance~\ref{SS::exponential})
$$\frac{n^n}{e^n} \leq n! $$
As has been remarked before, the function $\l(t,s)=(t-s)$ is submultiplicative.
Note that if  $\l(t,s)$ is submultiplicative, then  
\[ \mu(t,s):=\alpha(s) \beta(t) \l(t,s)\]
is also submultiplicative if  $\a(s)$ and
$\b(t)$ are monotonously increasing and decreasing respectively.
In particular, for $p,q,C \ge 0$, the weight function
\[\D \subset \RM^2_{>0} \to \RM,\  (t,s) \mapsto  C\,s^{p} t^{-q} (t-s) \] 
is submultiplicative. Note that for $pq \neq 0$, the function is however no longer monotonous.
From now on we fix  a submultiplicative function $\l(t,s)$ and write
  \[\Lt^k(E,E):=\Lt^{\l_k}(E,E)\]
This space will be called {\em the space of $k$-local operators} (with respect to $\l$).

\subsection{Existence of functorial calculus}
\label{SS::exponential}
From now on we fix a submultiplicative weight $\l$.
Given a power series
$$ f=\sum_{n \geq 0} a_n z^n$$
we use the notation
$$ |f|=\sum_{n \geq 0} |a_n| z^n.$$

\begin{definition}
The {\em Borel transform}  of a formal power series is defined by
$$B:\CM[[z]] \to \CM[[z]],\ \sum_{n \geq 0} a_n z^n \mapsto \sum_{n \geq 0} \frac{a_n}{n!} z^n. $$
\end{definition}
We fix a Kolmogorov space $E$ over $B$ and a submultiplicative weight function 
$\lambda:B \to \RM$.
This defines a Kolmogorov-space $$ \Lt^1(E,E) \to \D^{op}$$ of 1-local operators.
We use the following notation:
\[ \Xt(R):=\{(t,s,u) \in \Lt^1(E,E)\;|\;\; |u| < R \l(t,s) \}\]

\begin{theorem}\label{T::Borel} Let
$f=\sum_{n \geq 0} a_n z^n \in \CM\{z\}$ be a convergent power series with $R$ as radius 
of convergence. Then there is a well-defined map of Banach spaces over $\D \subset \RM^2$, that we call the {\em Borel map}:
$$\Bt f:\Xt(R) \to \Hom(E,E),\ (t,s,v) \mapsto (t,s,\sum_{n=0} \frac{a_n}{n!} v^n) $$
and one has the estimate
$$|\Bt f(u)| \leq |f|\left(\frac{|u|}{\l(t,s)}\right) .$$
\end{theorem}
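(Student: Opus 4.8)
The plan is to show that the assignment $(t,s,v)\mapsto \sum_{n\ge 0}\frac{a_n}{n!}v^n$ is well-defined by checking absolute convergence of the operator series in the appropriate Banach space, and that the stated estimate controls its norm. First I would fix a point $(t,s)\in\D$ and an element $u\in\Xt(R)$, so that $u\in\Lt^1(E,E)_\l$ with $\|u\|<R\,\l(t,s)$; here $\|u\|$ denotes the norm of $u$ as a $1$-local operator, i.e. $\|u\|=\sup_{r<q\le t}\l(q,r)\|u_{r,q}\|$. The key structural input is the composition behaviour of local operators established earlier: since $u\in\Lt^{\l_1}(E,E)$, the $n$-fold composite $u^n$ lies in $\Lt^{\l_n}(E,E)$ with $\|u^n\|\le\|u\|^n$. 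Recalling that $\l_n(t,s)=e^n\l^n(t,s)/n^n$, this means the fibre component $(u^n)_{r,q}\in\Hom(E_q,E_r)$ has operator norm bounded by $\|u\|^n\,n^n/(e^n\l^n(q,r))$ for each $r<q\le t$.

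Next I would bound the candidate sum $\Bt f(u):=\sum_{n\ge 0}\frac{a_n}{n!}u^n$, viewed fibrewise as a section of $\Ht om(E,E)$ over $\D(t,s)$. Using the elementary inequality $n^n/e^n\le n!$ (the arbitrary factor $e$ in the definition of submultiplicativity was inserted precisely for this), each term satisfies
\[
\left|\frac{a_n}{n!}\right|\,\|(u^n)_{r,q}\|\le \frac{|a_n|}{n!}\cdot\frac{\|u\|^n n^n}{e^n\l^n(q,r)}\le |a_n|\left(\frac{\|u\|}{\l(q,r)}\right)^n.
\]
Since the restriction maps have norm $\le 1$, for $(r,q)\in\D(t,s)$ one has $\l(q,r)\ge$ (something comparable to) $\l(t,s)$ only if $\l$ is monotone, which in general it is not; so instead I would argue directly at each fibre $(r,q)$. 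The series $\sum_n |a_n|(\|u\|/\l(q,r))^n$ converges provided $\|u\|/\l(q,r)<R$. This is where the definition of $\Xt(R)$ must be used with care: $\|u\|<R\,\l(t,s)$ gives $\l(q,r)\|u_{r,q}\|\le\|u\|<R\,\l(t,s)$, hence $\|u_{r,q}\|<R\,\l(t,s)/\l(q,r)$, but to run the estimate termwise on $u^n$ I want $\|(u^n)_{r,q}\|\le \|u\|^n n^n/(e^n\l^n(q,r))$, so I need $\|u\|/\l(q,r)<R$ for every $(r,q)\in\D(t,s)$ — which is exactly the content of $\|u\|<R\,\l(t,s)$ together with horizontality forcing $\|u\|$ to be realised as a supremum over precisely these $(r,q)$. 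So the series converges absolutely in each $\Hom(E_q,E_r)$, defines a horizontal section because each $u^n$ is horizontal and horizontality is a closed condition, and its norm as an element of $\Ht om(E,E)$ over $\D(t,s)$ — i.e. $\sup_{(r,q)}\|(\Bt f(u))_{r,q}\|$ — is bounded by $\sum_n|a_n|(\|u\|/\l(t,s))^n=|f|(\|u\|/\l(t,s))$, which is the claimed estimate.

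Finally I would note that the map is a map of Banach spaces over $\D$ in the required sense: it sends the fibre of $\Xt(R)$ over $(t,s)$ into the fibre of $\Ht om(E,E)$ over $(t,s)$, and it is compatible with the restriction maps of $\D$ because composition of local operators is, and because power series with a fixed radius of convergence can be restricted term by term. The main obstacle I anticipate is bookkeeping the two different norms in play — the $\l_1$-norm on $u$ versus the $\l_n$-norm on $u^n$ — and making sure that $\l(t,s)$ (rather than some pointwise $\l(q,r)$) is the quantity that appears in the final bound; the resolution is that Proposition~\ref{P::tauto}-style identification of an element of an opposite Kolmogorov space with a bounded horizontal section over the triangle $\D(t,s)$ means $\|u\|$ is by definition the relevant supremum, so the bound $\|u\|<R\l(t,s)$ propagates uniformly to every fibre of the triangle. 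Everything else is the standard comparison with the scalar majorant series $|f|$.
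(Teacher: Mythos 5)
Your proposal correctly identifies the two key ingredients also used in the paper: the composition bound $\|u^n\|\le\|u\|^n$ for local operators and the elementary inequality $n^n/e^n\le n!$. But there is a genuine gap in how you apply them, stemming from a misreading of what $\Bt f(u)$ is.

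The target of the Borel map is $\mathcal{H}om(E,E)$, whose fibre over $(t,s)$ is the \emph{single} Banach space $\Hom(E_t,E_s)$ with the operator norm. So $\Bt f(u)$ is a single bounded operator $E_t\to E_s$, not a horizontal section over the triangle $\D(t,s)$. The paper's proof makes this precise by introducing, for each $n$, the inclusion (evaluation-at-the-corner) $\varphi_n:\Lt^n(E,E)_\l\to\mathcal{H}om(E,E)$, noting $\|\varphi_n\|=n^n/(e^n\l(t,s)^n)\le n!/\l(t,s)^n$, and then summing $\sum_n\frac{a_n}{n!}\varphi_n(u^n)$ in $\Hom(E_t,E_s)$. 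The estimate $\|\Bt f(u)\|\le|f|(\|u\|/\l(t,s))$ then falls out with no further work.

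You instead try to establish convergence of $\sum\frac{a_n}{n!}(u^n)_{r,q}$ at \emph{every} fibre $(r,q)\in\D(t,s)$ and then take a supremum. This runs into exactly the difficulty you flag and then wave away: the claim that ``$\|u\|<R\l(t,s)$ gives $\|u\|/\l(q,r)<R$ for every $(r,q)\in\D(t,s)$'' is false. For the model weight $\l(t,s)=t-s$ and $s\le r<q\le t$ one has $\l(q,r)=q-r\le t-s=\l(t,s)$, so the scalar majorant $|f|(\|u\|/\l(q,r))$ is \emph{larger} at inner fibres and in fact diverges as $(q,r)$ approaches the diagonal. Horizontality of $u$ does not rescue this: the definition $\|u\|=\sup_{(r,q)}\l_1(q,r)\|u_{r,q}\|$ says the \emph{product} is uniformly bounded, not that the denominator stays above $\l(t,s)$. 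Consequently your final step, bounding $\sup_{(r,q)}\|(\Bt f(u))_{r,q}\|$ by $|f|(\|u\|/\l(t,s))$, is also unjustified for the same reason. The fix is precisely to not attempt a section-valued construction: evaluate only at the corner $(t,s)$, where $\l(t,s)$ is the relevant scale, and the stated estimate follows immediately.
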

\begin{proof}
By Proposition \ref{P::algebra}, for $u \in \Lt^1(E,E)$, we have $u^n \in \Lt^n(E,E)$ and 
\[ |u^n| \leq |u|^n \]
Thus there is an inclusion 
$$\p_n: \Lt^n(E,E) \to \Hom(E,E) $$
of Banach-spaces over $\Delta$ and by definition of the rescaled norms
$$ \| \p_n(u) \|  \leq \frac{n^n}{e^n\l(t,s)^n}|u|$$
on the norm. Here $\| \cdot \|$ stands for the standard operator norm inside the Banach space $\Hom(E_t,E_s)$ of Banach operators and $u$ is in fact a triple $(t,s,v)$.

The choice of the constant $e$ gives a simple estimate 
for this norm:
$$\frac{n^n}{e^n\l(t,s)^n} \leq \frac{n!}{\l(t,s)^n} .$$

We obtain:
\[  \|\sum_{n=0}^{\infty} \frac{a_n}{n!} \p_n(u^n)\| \leq \ \sum_{n=0}^{\infty} |a_n| \left(
\frac{|u|}{\l(t,s)}\right)^n= |f|\left(\frac{|u|}{\l(t,s)}\right) .\]
This proves the theorem.
\end{proof} 

\subsection{The exponential map}\index{Exponential}
An important special case is that of the {\em exponential} where we take
\[f=\frac{1}{1-z},\;\;\;B(f) =e^z.\]
The convergence radius $R$ of the series $f$ is equal to $1$. Consider a local
$u \in \Lt^1(E,E)$. 
We then have
\[ \Xt(R)=\{(t,s,u)\in \Lt^1(E,E)\;\;|\;\;|u| < (t-s)\}\]
and get the estimate:
\[ |e^u| \le \frac{1}{1-\frac{|u|}{\l(t,s)}}\]

For instance if we choose for weight  function $\lambda(t,s)=t-s$ and  $u \in \Lt^1(E,E)$ with $|u|\le C$, $\le C.t$, $\le C.t^2$, then
the exponential of $u$ defines a global section over domains of the form:
\begin{center}
\begin{tikzpicture}[scale=2.5]
	\draw[->] (0,0) -- (0,1) node[left] {$s$};
	\draw[->] (0,0) -- (1,0) node[below] {$t$};
	\draw[thin, domain= 0.0:1.0] plot (\x, {\x)});
	\draw[thick,domain= 0.25:1.0, pattern= north east lines](0.25,0)
         plot (\x, {\x-0.25})-- (1,0) -- (0.25,0) -- cycle;
\end{tikzpicture}
\begin{tikzpicture}[scale=2.5]
	\draw[->] (0,0) -- (0,1) node[left] {$s$};
	\draw[->] (0,0) -- (1,0) node[below] {$t$};
	\draw[thin, domain= 0.0:1.0] plot (\x, {\x)});
	\draw[thick,domain= 0.0:1.0, pattern= north east lines](0,0)
         plot (\x, {\x-0.25*\x})-- (1,0) -- (0,0) -- cycle;
\end{tikzpicture}
\begin{tikzpicture}[scale=2.5]
	\draw[->] (0,0) -- (0,1) node[left] {$s$};
	\draw[->] (0,0) -- (1,0) node[below] {$t$};
	\draw[thin, domain= 0.0:1.0] plot (\x, {\x)});
	\draw[thick,domain= 0.0:1.0, pattern= north east lines](0,0)
         plot (\x, {\x-0.25*\x^2})-- (1,0) -- (0,0) -- cycle;
\end{tikzpicture}
\end{center}
\subsection{The derivative (Final part)}
We will show how the formalism works  by itself on domains in a trivial example where we can also make easy explicit computations.
 Consider once more the Kolmogorov space
 $$\Ot^b(D) \to \RM_{>0} , $$
 where $D \to \RM_{>0}$ has fibres $D_t=\{ z \in \CM: |z| <t \} .$
Let us consider the exponential of the derivative:
 $$u(t,s):\Ot^b(D_t) \to \Ot^b(D_s),\ g(z) \mapsto g'(z) $$
According to Taylor's formula
$$e^ug(z)=g(z+1) .$$
As an unbounded operator, the definition domain of the exponential consists of analytic series having a convergence radius $>s+1$.
  What does functorial calculus tells us in such a simple example?
 
 So we consider the weight 
 $$\l:\RM^2_{>0} \to \RM,\ (t,s) \mapsto t-s$$ 
 and consider the derivative as an element $u \in \Lt^1(E,E)$.
Our theorem says that  taking the exponential of the derivation, which corresponds to composition the flow $\p$: 
 $$(e^u)_{t,s}:\Ot^b(D_t) \to \Ot^b(D_s)$$ gives a well defined homomorphism provided we have the estimate:
 $$|u(t,s)|<t-s .$$
 But again by Cauchy inequality we get that $|u(t,s)|=1$ and then we recover the condition $t>s+1$.
 
 So functorial calculus informs us on the way the disk $D_s$ is translated under the flow of the vector field: if the estimates $t>s+1$ holds then the image of the disk $D_s$ under the flow of $\d_z$  at time $1$ is contained inside the disk $D_t$.
 
   In more complicated
  situations domains might be shrinked and moved in a complicated way, we do not know exactly how: our formalism takes care by itself of the necessary information to have a well-defined exponential. 
\subsection{Infinite products}
As we pointed out in the introduction, when dealing with infinite dimensional group actions, it is customary to use iterations where infinite number of transformations are involved~(see e.g. \cite{Moser_Pisa_2}). Our formalism allows us to spell out a theorem which ensure convergence of such iterations. To generalise Theorem~\ref{T::Borel} to infinite products, we need the following elementary lemma, then functorial calculus will do  the rest:
\begin{lemma} Let  $f$ a positive $C^1$-function such that $f(0)=1$.
Then for any real positive summable sequence $(x_i)$, the infinite product $\prod_i f(x_i)$ is convergent.
\end{lemma}
\begin{proof}
Indeed
$$\log \prod_{i \geq 0} f(x_i)=\sum_{i \geq 0} \log(1+x_i+o(x_i)) $$
and $\log(1+y) \sim y$. 
\end{proof}
From this lemma we deduce easily:
\begin{theorem}Let
$f=\sum_{n \geq 0} a_n z^n \in \CM\{z\}$ be a convergent power series with $R$ as radius 
of convergence and $f(0)=1$. Let $(u_i) \subset \Xt(R)$ be such that $\sum_i |u_i|$ is convergent. Then the product
$$g_k:=\Bt f(u_k)\Bt f(u_{k-1})\cdots\Bt f(u_0)=\prod_{i=0}^k \Bt f(u_i) $$ converges
to a horizontal section $g \in \G(A,\Hom(E,E))$
where $A=\{ (t,s): R\l(t,s)>\max |u_i| \}$.
Moreover one has the estimate
$$|\prod \Bt f(u_i)| \leq \prod_{i \geq 0}|f|\left(\frac{|u_i|}{\l(t,s)}\right) .$$
\end{theorem}
\begin{proof}

Consider again the inclusion:
$$\p_n: \Lt^n(E,E) \to Hom(E,E) $$

We use multi-index notations:
$$u^\a=\prod_i u_i^{\a_i},\ a_\a=\prod_i a_{\a_i},\ |\a|=\sum_i \a_i \text{ etc. } $$
where the sequence $\a=(\a_0,\dots, \a_n,\dots)$ has only a finite number of non-zero elements.

We have the formal equality :
$$\sum_\a \frac{a_\a}{|\a|!} x^\a=\prod_{i > 0} Bf(x_i) $$
to which we would like to give a meaning for 
$$x_i=\frac{|u_i|}{\l(t,s)} <R.$$
The homomorphisms $f(x_i)$ exist by Theorem \ref{T::Borel}.  
The estimate:
$$ \| \p_{|\a|}(u^\a) \|  \leq \frac{|\a|!}{\l(t,s)^{|\a|}}|u|^{\a}$$
shows that the series 
$$S= \sum_{\a} a_\a\frac{\p_{|\a|}(u^\a)}{|\a|!}$$
is convergent. Indeed:
$$ \|\sum_{|\a|>k} a_\a\frac{\p_{|\a|} (u^\a)}{|\a|!}\| \leq \sum_{|\a|>k} |a_\a|\frac{|u|^\a}{\l(t,s)^\a} $$
is the rest of a convergent power series
Let us show that the products 
$$g_k:=\Bt f(u_k)\Bt f(u_{k-1})\cdots\Bt f(u_0)$$ 
converge to $S$. We denote by 
$$deg(\a)=\max \{ k \in \NM:\a_k \neq 0 \}$$ 
the last term before the sequence becomes zero. We then have
\begin{align*}
 S(t,s)-g_k(t,s)& = \sum_{deg(\a)>k} a_\a\frac{u^\a}{|\a|!}\\
\| S(t,s)-g_k(t,s) \|& \leq \sum_{deg(\a)>k} |a_\a|\frac{|u|^\a}{\l(t,s)^\a}=\prod_{i > k}|f|\left(\frac{|u_i|}{\l(t,s)}\right)\\
\end{align*}
which is rest of the the convergent infinite product. This proves the theorem.
\end{proof} 
 
\begin{corollary} 
\label{C::compexponential1}
Let $E \to \RM_{>0}$  be a Kolmogorov space, and 
\[(u_i) \subset  L^1(E,E)\] 
a sequence of local morphisms with norm bounded by~$1$. Assume that 
$\s:=\sum| u_i | <+\infty $,
then the sequence
$$g_n:=e^{u_n}e^{u_{n-1}}\cdots e^{u_0}  $$ converges to a horizontal section $g \in \G^h(A,\Hom(E,E))$ where
\[A:=\{(t,s) \in ]0,\tau]^2\;|\; \l(t,s) >\max |u_i| \}.\]
 \end{corollary}
 This gives a practical criterion from which one can deduces the convergence of normal forms theorems, we refer to~\cite{KAM_theory_III} for simple examples of application.

\bibliographystyle{plain}
\bibliography{master.bib}
\end{document}